\providecommand{\U}[1]{\protect\rule{.1in}{.1in}}
\newtheorem{theorem}{Theorem}[section]
\newtheorem{acknowledgement}[theorem]{Acknowledgment}
\newtheorem{lemma}[theorem]{Lemma}
\newtheorem{proposition}[theorem]{Proposition}
\begin{document}
\title[Mass transport equations]{A McLean Theorem for the moduli space of Lie solutions to mass transport
equations. }
\author{Micah Warren}
\address{Department of Mathematics, Princeton University, Princeton NJ, USA}
\email{mww@math.princeton.edu}
\thanks{The author is supported in part by NSF Grant DMS-0901644. }
\date{\today}
\maketitle

\begin{abstract}
On compact manifolds which are not simply connected, we prove the existence of
\textquotedblleft fake\textquotedblright\ solutions to the optimal
transportion problem. These maps preserve volume and arise as the exponential
of a closed 1-form, hence appear geometrically like optimal transport maps.
The set of such solutions forms a manifold with dimension given by the first
Betti number of the manifold. In the process, we prove a Hodge-Helmholtz
decomposition for vector fields. The ideas are motivated by the analogies
between special Lagrangian submanifolds and solutions to optimal transport problems.

\end{abstract}

\section{\bigskip Introduction}

In \cite{KMW}, graphs of solutions to the optimal transportation were shown to
solve a volume maximization problem, using a calibration argument. With the
appropriate metric on the product space $M\times\bar{M}$, maximality is
equivalent to the vanishing of certain differential forms along the graph of
the optimal map. \ In this note, we discuss the converse and see that, at
least in the smooth case, topology allows for maximizers of the volume problem
which do not arise as solutions to the optimal transportation problem, despite
locally having the same geometric properties. \ These maximizers are special
Lagrangian in the sense of Hitchin \cite{H} and Mealy \cite{Me}, a pseudo
Riemannian analogue of the special Lagrangian geometry of Harvey and Lawson
\cite{HL}. \ 

In the case when the cost is given by Riemannian distance squared, Delano\"{e}
\cite{D} introduced the notion of ``Lie solutions of Riemannian transport
equations" (see also [L].) \ The graphs of Delano\"{e}'s solutions are
maximizers of the volume maximization problem discussed in \cite{KMW}.

We recall the McLean Theorem \cite[Theorem 3.6]{McL} \cite[Theorem 3.21]{Ma}

\begin{theorem}
[McClean]\ Suppose $L$ is an smooth embedded special Lagrangian submanifold of
a Calabi-Yau\ manifold. \ The moduli space $\mathcal{M}$ of special Lagrangian
submanifolds near $L$ is a manifold of dimension $b_{1}(L).$ \ The tangent
space to $\mathcal{M}$ is identified with the harmonic $1$-forms on $L,$ which
has a naturally induced $L^{2}~$metric.
\end{theorem}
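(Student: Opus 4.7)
The plan is to identify nearby Lagrangian deformations of $L$ with closed 1-forms via Weinstein's tubular neighborhood theorem, then linearize the special Lagrangian condition and apply the implicit function theorem. In detail, I would first invoke the Weinstein Lagrangian neighborhood theorem to produce a symplectomorphism between a neighborhood of $L$ in the Calabi-Yau $X$ and a neighborhood of the zero section in $(T^{*}L,\omega_{\mathrm{can}})$. Under this identification, any $C^{1}$-nearby Lagrangian submanifold is the graph of a closed 1-form $\alpha \in \Omega^{1}(L)$, so the Lagrangian condition is already captured by $d\alpha = 0$.

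Next I would encode the special Lagrangian condition. A Lagrangian $L' \subset X$ is special Lagrangian when $\mathrm{Im}(\Omega)|_{L'} = 0$, where $\Omega$ is the holomorphic volume form; on $L$ itself one has $\mathrm{Re}(\Omega)|_{L} = \mathrm{vol}_{L}$ and $\mathrm{Im}(\Omega)|_{L} = 0$. For a closed 1-form $\alpha$ with graph $L_{\alpha}$, define
\[
F : \{\alpha \in \Omega^{1}(L) : d\alpha = 0,\ \|\alpha\|_{C^{1}} < \varepsilon\} \longrightarrow \Omega^{n}(L), \qquad F(\alpha) = \pi_{*}\bigl(\mathrm{Im}(\Omega)|_{L_{\alpha}}\bigr),
\]
where $\pi : L_{\alpha} \to L$ is the projection. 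Then $L_{\alpha}$ is special Lagrangian precisely when $F(\alpha) = 0$. A short computation, using the fact that the Hodge star on $L$ relates the restrictions of $\mathrm{Re}(\Omega)$ and $\mathrm{Im}(\Omega)$ via the almost complex structure, shows that the linearization at $\alpha = 0$ satisfies $dF(0)\,\alpha = d^{*}\alpha \cdot \mathrm{vol}_{L}$ (up to sign conventions). Thus the combined linearized system for closed $\alpha$ with $F(\alpha) = 0$ is exactly the harmonic 1-form condition $d\alpha = 0,\ d^{*}\alpha = 0$, giving a tangent space of dimension $b_{1}(L)$ by Hodge theory.

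To upgrade from a tangent space computation to a genuine smooth moduli manifold, I would set up the problem on Banach spaces: take $\alpha$ in a Hölder space $C^{k,\beta}$ of closed 1-forms and regard $F$ as mapping into the Hölder space of exact $n$-forms (here using that $\int_{L} F(\alpha) = \int_{L_{\alpha}} \mathrm{Im}(\Omega) = 0$ by Stokes, since $\mathrm{Im}(\Omega)$ is closed and $L_{\alpha}$ is homologous to $L$). The Hodge decomposition identifies the cokernel of $d^{*}$ on closed forms with harmonic forms, so $dF(0)$ is surjective onto exact $n$-forms, with kernel the harmonic 1-forms. The implicit function theorem then yields a smooth finite-dimensional family of solutions parametrized by the harmonic 1-forms.

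The main obstacle, as usual in such moduli problems, is the absence of genuine obstructions to integrability: one must verify that the nonlinear operator $F$ really does land in the closed subspace of exact forms (so that Hodge theory makes the linearization surjective onto its image), and that a suitable slice can be chosen for the diffeomorphism action so the moduli space is Hausdorff and finite dimensional. Once these gauge and cohomological points are settled and the implicit function theorem is applied on appropriate Hölder or Sobolev completions, the natural $L^{2}$ inner product on harmonic 1-forms descends to the metric on $\mathcal{M}$ described in the statement.
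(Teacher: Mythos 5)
The paper does not prove this theorem; it quotes McLean's result, citing \cite{McL} and \cite{Ma}, as motivation for the analogous Theorem~\ref{main thm} on Lie solutions of mass transport. There is therefore no in-paper proof to compare against. That said, your argument is the standard and correct proof of McLean's theorem, and it closely parallels the strategy the paper uses for its own main result: define a map into $\Lambda^{2}\oplus\Lambda^{0}$ encoding the Lagrangian and calibration conditions, show that its linearization at the given solution is $\eta\mapsto(d\eta,d^{*}\eta)$, check that the image lies in exact and coexact forms, and conclude via Hodge decomposition and the implicit function theorem. Your decomposition into Weinstein's theorem (for the Lagrangian part) and the linearization $dF(0)\alpha=\pm\,d{*\alpha}$ (for the special condition) is the analogue of the paper's Lemma~\ref{main lemma}, and your observation that $\int_{L}F(\alpha)=0$ by Stokes plays the same role as the paper's lemma that $\Phi$ lands in exact and coexact forms. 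One minor point: your caveat about choosing a slice for a diffeomorphism action is not actually needed here --- once you work in the Weinstein neighborhood, nearby special Lagrangians are graphs of closed $1$-forms and distinct forms give distinct submanifolds, so the parametrization is already gauge-fixed; the implicit function theorem directly produces a chart on the moduli space and there is no residual group action to quotient by.
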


\bigskip The seminal paper of Harvey and Lawson shows that special Lagrangian
submanifolds are minimal submanifolds in Calabi-Yau manifolds, and that a
manifold is special Lagrangian if and only if the K\"{a}hler form and a
certain $n$-form vanishes along the submanifold. \ Hitchin \cite{H} analyzed
the metric on the moduli space in McLean's theorem and showed that it arises
via a Lagrangian embedding into a pseudo-Riemannian space. Hitchin's notion of
special Lagrangian, that the K\"{a}hler form and certain combinations of
$n$-forms vanish, describes Lie solutions of the mass transport problems. \ 

The optimal transportation problem is the following. \ Given probability
volume forms $\rho$ and $\bar{\rho}$ on manifolds $M$ and $\bar{M}$, and a
cost function $c:$ $M\times\bar{M}\rightarrow%
\mathbb{R}
,$ find a map $T:M\rightarrow\bar{M}$ which minimizes a cost integral,%
\[
\int_{M}c(x,T(x))d\rho
\]
among all maps $T$ which preserve the volume, i.e.
\begin{equation}
T_{\ast}\rho=\bar{\rho}.\ \label{meas}%
\end{equation}
The work of Brenier \cite{B} and McCann \cite{McC} shows that given standard
conditions on the cost function, the unique solution will be the map
satisfying (\ref{meas}) and arising as the cost exponential of the gradient of
a potential function $u:$%
\begin{equation}
T(x)=c\text{-}\exp_{x}du(x). \label{cexpdu}%
\end{equation}
\ A local version of (\ref{meas}), namely,
\begin{equation}
T^{\ast}\bar{\rho}=\rho\ \label{pullback}%
\end{equation}
is equivalent to the form%
\[
\bar{\rho}d\bar{x}-\rho dx
\]
vanishing on the graph $(x,T(x))\subset M\times\bar{M},$ and (\ref{cexpdu}%
)\ implies the vanishing of a K\"{a}hler form defined in \cite{KM}. \ The more
general problem we attack here is to find maps which locally solve the optimal
transport problem, that is, satisfy (\ref{pullback})\ and arise as the
exponential of a closed form
\begin{equation}
T(x)=c\text{-}\exp_{x}\left(  \eta(x)\right)  ,\text{ \ }d\eta=0. \label{Lie}%
\end{equation}
\ \ 

\ The result is the following. \ 

\begin{theorem}
\label{main thm} Suppose $M,\bar{M}$ are compact manifolds with nowhere
vanishing smooth densities $\rho$ and $\bar{\rho}$. \ Let $c$ be a continuous
cost function on $M\times\bar{M},$ which away from a cut locus $\mathcal{C}$,
is smooth and satisfies local and global twist assumptions (A2) and (A1). \ If
a diffeomorphism $T$ is a smooth Lie solution of the mass transport equation
(i.e. satisfying (\ref{pullback}) and (\ref{Lie}))\ avoiding the cut locus,
then there is a moduli space of smooth Lie solutions near $T$ which is a
smooth manifold of dimension $b_{1}(M).$ \ \ 
\end{theorem}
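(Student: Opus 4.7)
To mirror McLean's original argument, I would parametrize deformations of $T$ by closed $1$-forms and split them via the Hodge--Helmholtz decomposition (established independently in the paper): near the generating form $\eta_{0}$ of $T$, every closed $1$-form writes uniquely and orthogonally as $\eta=\eta_{h}+du$ with $\eta_{h}\in\mathcal{H}^{1}(M)$ and $u\in C^{k,\alpha}(M)/\mathbb{R}$. Setting $T_{\eta_{h},u}(x):=c\text{-}\exp_{x}\!\bigl(\eta_{h}(x)+du(x)\bigr)$, condition (\ref{Lie}) is automatic, and (\ref{pullback}) reduces to the scalar equation $F(\eta_{h},u)=0$, where
\[
F:\mathcal{H}^{1}(M)\times C^{k,\alpha}(M)/\mathbb{R}\longrightarrow C^{k-2,\alpha}_{0}(M),\qquad F(\eta_{h},u):=\frac{T_{\eta_{h},u}^{\ast}\bar{\rho}}{\rho}-1,
\]
and the subscript $0$ denotes functions with vanishing $\rho$-integral (automatic, since each $T_{\eta_{h},u}$ remains an orientation-preserving diffeomorphism onto $\bar{M}$ close to $T$). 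Smoothness of $F$ between Banach spaces uses that $T$ avoids the cut locus, where $c\text{-}\exp$ is smooth.

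The crucial step is to show that the linearization $L:=\partial_{u}F(\eta_{h,0},u_{0})$ is a Banach isomorphism. Varying $u$ by $v$ and computing the first variation of the cost exponential, one finds that $Lv$ is a second-order differential operator whose principal symbol is a positive multiple of the inverse of the mixed cost Hessian $-\partial_{x}\partial_{\bar{x}}c$; by (A2) this is positive-definite, so $L$ is uniformly elliptic. A routine integration by parts puts $L$ in divergence form, making it formally self-adjoint with respect to $\rho\,dV$ and Fredholm of index zero. The strong maximum principle (or, equivalently, the fact that the leading symbol matches $\delta d$ up to a positive multiplier) then forces the kernel of $L$ on $C^{k,\alpha}(M)/\mathbb{R}$ to consist of constants alone, so $L$ is an isomorphism onto $C^{k-2,\alpha}_{0}(M)$.

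The implicit function theorem now yields a neighborhood $U\subset\mathcal{H}^{1}(M)$ of $\eta_{h,0}$ and a smooth map $\eta_{h}\mapsto u(\eta_{h})$ solving $F(\eta_{h},u(\eta_{h}))\equiv 0$; the family $\{T_{\eta_{h},u(\eta_{h})}:\eta_{h}\in U\}$ then parametrizes the moduli space as a smooth manifold of dimension $\dim\mathcal{H}^{1}(M)=b_{1}(M)$, and elliptic bootstrapping promotes each $u(\eta_{h})$ from $C^{k,\alpha}$ to $C^{\infty}$. The principal obstacle I foresee is the Hodge--Helmholtz decomposition itself: the codifferential and inner product relevant to this problem are cost-adapted rather than those of an arbitrary background metric, so orthogonality of harmonic forms to the exact piece, and surjectivity of $L$ onto $C^{k-2,\alpha}_{0}$, must be established for this modified elliptic complex before the implicit function theorem can be invoked; this is presumably the content of the Hodge--Helmholtz theorem flagged in the abstract.
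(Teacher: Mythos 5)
Your proposal is correct, and it takes a genuinely different route from the paper. The paper works with \emph{all} $1$-forms $\eta$, packages both constraints into a single map $\Phi(\eta)=((Id\times T_V)^{\ast}\omega,\ast(T_V^{\ast}\bar{\rho}d\bar{x}-\rho dx))$, and proves via a conformal rescaling $g_{ij}=w_{ij}\bigl(\rho\bar{\rho}/\det b\bigr)^{1/(n-2)}$ of the Kim--McCann metric (Proposition~\ref{LB} and Lemma~\ref{main lemma}) that $D\Phi(\eta)=(d\eta,d^{\ast}\eta)$ with respect to $g$, whence the kernel is $\mathcal{H}^{1}_{g}(M)$. Since the conformal exponent $1/(n-2)$ is singular at $n=2$, the paper must treat that case by a warped-product trick. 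You instead restrict to closed $1$-forms from the outset, so (\ref{Lie}) is automatic, split $\eta=\eta_{h}+du$ by the ordinary Hodge decomposition for any auxiliary metric, and reduce to a single scalar equation $F(\eta_{h},u)=0$. That avoids constructing the cost-adapted metric and works uniformly for all $n\geq2$. Two small adjustments to your writeup: the principal symbol of $L$ is $w^{ij}\xi_{i}\xi_{j}$ with $w_{ij}=u_{ij}+c_{ij}$ (positive definite by space-likeness), not the inverse of $-\partial_{x}\partial_{\bar{x}}c$; and the worry in your last paragraph is unnecessary. You do not need the cost-adapted Hodge complex nor the self-adjointness of $L$ with respect to $\rho\,dV$: the strong maximum principle gives $\ker L=\mathbb{R}$, a scalar uniformly elliptic second-order operator on compact $M$ has index zero, and $\operatorname{im}L\subset C^{k-2,\alpha}_{0}$ because $\int F(\eta_{h},u)\,\rho\,dx\equiv0$ forces $\int (Lv)\,\rho\,dx=0$; equality of codimension-one subspaces then yields surjectivity. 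What the paper's heavier machinery buys, beyond the dimension count, is the McLean--Hitchin package: the tangent space to the moduli space is canonically the $g$-harmonic $1$-forms, so the moduli space inherits a natural $L^{2}$ metric, which your auxiliary-metric version does not produce canonically.
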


Remark:\ \ There are cost functions such that even for some smooth densities,
the optimal solutions will not be smooth, as was demonstrated by Loeper [Lo],
when the (A3) assumption on the cost is not satisfied. \ \ Also, in the
absence of the (A3) assumption, local $c$-convexity does not imply global, so
even on simply connected domains, Lie solutions are not necessarily solutions
to the optimal transport problem.

The core of our result is a Hodge-Helmholtz type decomposition holding at Lie
solutions, which decomposes deformations of maps into those which preserve
property (\ref{pullback}), those which preserve property (\ref{Lie}) and the
harmonic deformations, which preserve both.  The metric used is the one
induced by the linearized operator to the optimal transport equation,
multiplied by an explicit conformal factor. This metric and concomitant
Laplacian are defined in section 2, and their Hodge-Helmholtz properties are
shown in section 3. \ In section 4 we give a possible politico-economic
application of solutions. 

\begin{acknowledgement}
The author is very grateful for advice from Phillipe Delano\"{e}.
\end{acknowledgement}

\section{Preliminaries}

For $n$-dimensional manifolds $M$ and $\bar{M}$, let $c:N\subset M\times
\bar{M}\rightarrow\mathbf{R}$ be a continuous cost function which is smooth
almost everywhere, except on a set $\mathcal{C}\mathfrak{=}M\times\bar{M}-N$
which we call the \textquotedblleft cut locus." \ (The reason for this
terminology is clear if we use the distance squared function on a manifold as
the cost function.) \ \ Let $D\bar{D}c$ be the $n\times n$ matrix\ given by
(in local coordinates)
\[
\left(  D\bar{D}c\right)  _{i\bar{j}}(x,\bar{x})=\frac{\partial^{2}}{\partial
x^{i}\partial\bar{x}^{j}}c(x,\bar{x}).
\]
On $N$ we will require that
\begin{equation}
\det\left(  D\bar{D}c\right)  \neq0 \tag{A2}%
\end{equation}
which is a local version of the standard \emph{twist }condition: \ For each
$x$ and $\bar{x}$ respectively, \
\begin{equation}
\bar{x}\rightarrow Dc(x,\bar{x}) \tag{A1}%
\end{equation}
is invertible, with an inverse depending continuously on $x$. \ 

To be clear with our conventions, we recall the following Kantorovich problem:
If
\[
J(u,v)=\int_{M}(-u)d\rho+\int_{\bar{M}}vd\bar{\rho},
\]
the problem is to maximize $J$ over all $-u(x)+v(\bar{x})\leq c(x,\bar{x}).$
\ One also considers a dual problem:\ If
\[
I(\pi)=\int_{M\times\bar{M}}c(x,y)d\pi,
\]
find the minimum of $I$ over all measures $\pi$ on the product space
$M\times\bar{M}$ which have marginals $\rho$ and $\bar{\rho}.$ It is well
known (cf \cite{V}) that
\[
\sup_{-u(x)+v(\bar{x})\leq c(x,\bar{x})}J(u,v)=\inf_{\pi\epsilon\Pi(\rho
,\bar{\rho})}I(\pi).
\]
With this setup in mind we can derive the optimal map $T$ from $u$ as
follows:\ \ Suppose $\left(  x_{0},\bar{x}_{0}\right)  $ is a point where the
equality $-u(x_{0})+v(\bar{x}_{0})=c(x_{0},\bar{x}_{0})$ occurs. The function
\[
z_{\bar{x}_{0}}(x)=c(x,\bar{x}_{0})+u(x)\text{ }%
\]
must have a minimum at $x_{0}.$\ \ Then define the cost exponential
$T(x_{0},du)=\bar{x}_{0}$. \ If differentiable, from the fact that $z_{\bar
{x}_{0}}(x)$ is at a minimum we have%
\begin{equation}
u_{i}+c_{i}(x,T(x,du))=0\label{eq:contact}%
\end{equation}
where $c_{i}$ refers to differentiation in the first variable. \ (One can
check that $T(x,u)=-\exp_{x}\nabla u$ when $c(x,\bar{x})=d^{2}(x,\bar{x})/2$).
\ This only depends locally on the function $u,$ and clearly requires that
$Du$ stay inside the range of $Dc(x,\cdot).$ \ \ The elliptic optimal
transportation equation can be derived by taking another derivative and then a
determinant:%
\begin{align}
u_{ij}+c_{ij}+c_{i\bar{s}}T_{j}^{\bar{s}} &  =0\label{id0}\\
\det(u_{ij}+c_{ij}) &  =\det(-c_{i\bar{s}}T_{j}^{\bar{s}})=\det(-c_{i\bar{s}%
})\frac{\rho(x)}{\bar{\rho}(T(x))}\label{eq:elliptic}%
\end{align}
using the fact that $T$ locally pushes $\rho$ forward to $\bar{\rho},$ so
satisfies
\begin{equation}
\bar{\rho}(T(x))\det DT=\rho(x).\label{mp}%
\end{equation}
Here and in the sequel we use the following conventions:\ Indices $i,j,k$ etc
will be coordinates on $M$ while $\bar{p},\bar{s},\bar{v},$ etc will be
coordinates on $\bar{M}.$The variable $t$ will be reserved for variations.$\ $%
\ \ We assume on any local product chart that $c_{i\bar{s}}$ is negative
definite, and let $b_{i\bar{s}}=-c_{i\bar{s},}$ with $b^{\bar{s}i\text{ }}%
$\ its inverse. \ The tangent space is denoted $\mathfrak{T}M.$ \ We use
$\ w_{ij}(x)=u_{ij}(x)+c_{ij}(x,T(x)).$ \ \ To be clear, we will use
$\frac{\partial}{\partial x^{k}}w_{ij}$ to denote derivatives of $w$,
otherwise a subscript will denote differentiation. \ We note that most of the
quantities we deal with are coordinate invariant. \ In particular $w_{ij}$
actually is an honest tensor. \ For future use, we make note of the following
restatement of the above identity (\ref{id0})%
\begin{align}
w_{ij} &  =b_{i\bar{s}}T_{j}^{\bar{s}}\label{id1}\\
w^{kj}T_{j}^{\bar{s}} &  =b^{\bar{s}k}.\nonumber
\end{align}

\subsection{ Linearizing the elliptic equation.}

In order to linearize (\ref{eq:elliptic}) take a variation,
\begin{equation}
u(x)+tv(x).\label{variation}%
\end{equation}
First, we insert (\ref{variation}) into (\ref{eq:contact}) and differentiate
to obtain%
\[
v_{i}-b_{i\bar{s}}(x,T(x,du))D_{t}T^{\bar{s}}=0
\]
hence
\[
D_{t}T^{\bar{s}}=T_{t}^{\bar{s}}=v_{i}b^{\bar{s}i}.
\]
Now, taking a logarithm of (\ref{eq:elliptic})
\begin{align*}
F(x,Du,D^{2}u) &  =\\
&  \ln\det(u_{ij}+c_{ij}(x,T(x))-\ln\det(b_{i\bar{s}}(x,T(x))-\ln\rho
(x)+\ln\bar{\rho}(T(x))
\end{align*}
which is linearized
\begin{align}
Lv &  =\frac{d}{dt}F(u+tv)=w^{ij}(v\,_{ij}+c_{ij\bar{s}}T_{t}^{\bar{s}%
})-b^{\bar{s}i}b_{\bar{s}i\bar{p}}T_{t}^{\bar{p}}+\left(  \ln\bar{\rho
}\right)  _{\bar{s}}T_{t}^{\bar{s}}\label{eq:linearized}\\
&  =w^{ij}v\,_{ij}-w^{ij}b_{ij\bar{s}}b^{\bar{s}k}v_{k}-b^{\bar{s}i}b_{\bar
{s}i\bar{p}}b^{pk}v_{k}+\left(  \ln\bar{\rho}\right)  _{\bar{s}}b^{\bar{s}%
k}v_{k}\nonumber
\end{align}
A version of this linearized operator was introduced by Trudinger and Wang
\cite[2.18]{TW}.

\subsection{The KM and modified KM\ metrics and a related Laplace-Beltrami}

In \cite{KM} Kim and McCann considered the following pseudo metric on the
product space $M\times\bar{M}:$%
\[
h=\left(
\begin{array}
[c]{cc}%
0 & b_{i\bar{s}}\\
b_{i\bar{s}}^{T} & 0
\end{array}
\right)
\]
and symplectic form
\[
\omega=b_{i\bar{s}}dx^{i}\wedge d\bar{x}^{\bar{s}}.
\]
In this metric, the graph $(x,T(x))$ of the cost exponential of any locally
$c$-convex potential $u$ is space-like and Lagrangian. \ The induced metric,
in terms of coordinates on $M$, is given by
\[
(x,T(x))^{\ast}h=u_{ij}+c_{ij}=w_{ij}.
\]
For given mass densities $\rho$ and $\bar{\rho}$, consider the following
metric in \cite{KMW}%

\[
h=\frac{1}{2}\left(  \frac{\rho\bar{\rho}}{\det b_{is}}\right)  ^{1/n}\left(
\begin{array}
[c]{cc}%
0 & b_{i\bar{s}}\\
b_{i\bar{s}}^{T} & 0
\end{array}
\right)
\]
and the calibrating form
\[
\Omega=\frac{1}{2}(\rho+\bar{\rho}).
\]
The graph of any solution to the optimal transportation problem will be a
calibrated maximal Lagrangian surface with respect to this metric, but the
converse is not true in general. \ Calibrated maximal Lagrangian surfaces are
what we are studying in this paper. \ 

To begin, we define another metric. \ \ We will show that with respect to this
metric, the tangent space of deformations of calibrated submanifolds coincide
with harmonic $1$-forms, which is the same situation that occurs in McLean's
theorem. \ \ The metric we use is yet another conformal factor of the metric
used by Kim and McCann, differing by a power of the conformal factor from the
metric associated to the calibration in \cite{KMW}. \ \ For a given frame on
$M$, and a solution of the problem $T$ define
\begin{equation}
g_{ij}(x)=w_{ij}(x)\left(  \frac{\rho(x)\bar{\rho}(T(x))}{\det b_{i\bar{s}%
}(x,T(x))}\right)  ^{1/(n-2)}.\label{themetric}%
\end{equation}
This metric appears naturalyl suited for our problem, as we will use the Hodge
decomposition with respect to this metric in the sequel.

Before proving the next claim, note that at any point $x\in M,$ one has by the
twist condition that for any covector $\eta\in\mathfrak{T}_{x}^{\ast}M$ there
is at most one point $\bar{x}=T(x,\eta)$ in $\bar{M}$ so that $\eta
=-D(x,T(x,\eta))$ \ This defines $T$ whenever $\eta$ is in an appropriate
domain. \ 

Our first claim is the following.\ 

\begin{proposition}
{\label{LB}} Let $n\geq3.$ \ Let $T=T(x,\eta)$ be a cost exponential of a
$1$-form locally given by $du.$ \ \ Define%
\[
\theta=\ln\det w_{ij}-\ln\rho(x)+\ln\bar{\rho}(x,T(x))-\ln\det b_{i\bar{s}},
\]
and
\[
\lambda=\left(  \frac{\rho(x)\bar{\rho}(T(x))}{\det b_{i\bar{s}}%
(x,T(x))}\right)  ^{1/(n-2)},
\]
then with the above metric (\ref{themetric}) on $M,$
\[
Lz=\lambda\left(  \triangle_{g}z+\frac{1}{2}\langle\nabla\theta,\nabla
z\rangle_{g}\right)
\]
where $\triangle_{g}z$ is Laplace-Beltrami operator with respect to $g$ and
$L$ is a the linearized operator defined by (\ref{eq:linearized}) . \ 
\end{proposition}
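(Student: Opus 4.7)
The plan is to verify the identity by a direct local-coordinate calculation, exploiting the conformal relationship $g_{ij}=\lambda w_{ij}$ and matching the result term-by-term against the linearization formula \eqref{eq:linearized}.

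First I would apply the standard conformal rescaling formula. Since $g$ is conformal to $w$ (viewed as a symmetric tensor on $M$) with factor $\lambda$, writing $\phi=\tfrac{1}{2}\ln\lambda$ yields
$$\lambda\,\Delta_g z = \Delta_w z + \tfrac{n-2}{2}\, w^{ij}(\ln\lambda)_i z_j,$$
and $\lambda\langle\nabla\theta,\nabla z\rangle_g = w^{ij}\theta_i z_j$. The exponent $1/(n-2)$ in $\lambda$ is chosen precisely so that, when we add $\tfrac{n-2}{2}\ln\lambda$ and $\tfrac12\theta$, the $\ln\rho$-terms cancel:
$$\tfrac{n-2}{2}(\ln\lambda)_i + \tfrac{1}{2}\theta_i \;=\; \partial_i\!\left[\tfrac{1}{2}\ln\det w + \ln\bar\rho(T) - \ln\det b\right].$$
Combined with the divergence-form expression $\Delta_w z = w^{ij}z_{,ij} + \bigl[\partial_i w^{ij} + \tfrac12 w^{ij}(\ln\det w)_i\bigr]z_j$, the right-hand side of the claim becomes
$$w^{ij}z_{,ij} + z_j\!\left[\partial_i w^{ij} + w^{ij}\partial_i\ln\det w + w^{ij}\partial_i\ln\bar\rho(T) - w^{ij}\partial_i\ln\det b\right].$$

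Next I would match the $\bar x$-derivative contributions. The leading term $w^{ij}z_{,ij}$ already matches the principal part of $Lv$. By the chain rule through $T(x)$, $\partial_i\ln\bar\rho(T)=(\ln\bar\rho)_{\bar s}T^{\bar s}_i$ and $\partial_i\ln\det b = b^{\bar s l}(b_{l\bar s,i}+b_{l\bar s,\bar p}T^{\bar p}_i)$. Contracting with $w^{ij}$ and using the identity $w^{ij}T^{\bar s}_i=b^{\bar s j}$ from \eqref{id1}, the $\bar x$-derivative pieces yield precisely
$$w^{ij}\partial_i\ln\bar\rho(T)\,z_j=(\ln\bar\rho)_{\bar s}b^{\bar s j}z_j,\qquad -w^{ij}b^{\bar s l}b_{l\bar s,\bar p}T^{\bar p}_i z_j = -b^{\bar s i}b_{\bar s i\bar p}b^{\bar p j}z_j,$$
reproducing the last two first-order terms in \eqref{eq:linearized}.

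Finally, I would verify the remaining identity
$$\partial_i w^{ij} + w^{ij}\partial_i\ln\det w - w^{ij}b^{\bar s l}b_{l\bar s,i} \;=\; -w^{ik}b_{i\bar s,k}b^{\bar s j},$$
which reduces the claim to matching the pure $x$-derivative contributions with the single term $-w^{ij}b_{ij\bar s}b^{\bar s k}v_k$ in $Lv$. This follows by differentiating the factorization $w_{kl}=b_{k\bar s}T^{\bar s}_l$ in $x^i$, using the symmetry $T^{\bar s}_{kl}=T^{\bar s}_{lk}$, and exploiting the second identity of \eqref{id1} together with the relation $w^{ij}b_{j\bar s}=(T^{-1})^i_{\bar s}$ that it implies.

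The main obstacle lies in this last step: one must carefully separate the pure $x$-derivatives from the chain-rule contributions already absorbed in the previous paragraph, and verify a Piola-type cancellation between $\partial_i w^{ij}$, the logarithmic derivative $w^{ij}\partial_i\ln\det w$, and the twisted term $w^{ij}b^{\bar s l}b_{l\bar s,i}$. The bookkeeping is intricate but works precisely because the conformal factor $\lambda=(\rho\bar\rho/\det b)^{1/(n-2)}$ has been engineered so that the drift introduced by rescaling $w$ to $g$ exactly matches the drift dictated by differentiating the defining function $\theta=F(x,Du,D^2u)$.
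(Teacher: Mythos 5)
Your proposal is correct and amounts to the same direct local-coordinate verification that the paper carries out: expand $\lambda\,\triangle_g z$ into a principal part plus first-order terms, use the exponent $1/(n-2)$ to cancel the $\ln\rho$ contributions, and match the remaining drift against $L$ via the identity $w^{ij}T^{\bar s}_j = b^{\bar s i}$. The only organizational difference is in how the closedness of $\eta$ enters: the paper isolates the antisymmetric part of $\partial_j w_{kl}$ (using $w_{kl}=u_{kl}+c_{kl}$ so $u_{klj}$, $c_{klj}$ are totally symmetric), while you differentiate the factorization $w_{kl}=b_{k\bar s}T^{\bar s}_l$ and use the symmetry of $T^{\bar s}_{kl}$ together with $w^{ij}b_{j\bar s}=(T^{-1})^i_{\bar s}$; the two bookkeeping schemes are algebraically equivalent and both collapse to $-w^{ik}b_{ik\bar s}b^{\bar s j}$, so your outline, once the final identity is written out, reproduces the paper's proof.
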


\begin{proof}
\ Defining
\[
g_{ij}=\lambda w_{ij}%
\]
with
\[
\lambda=\left(  \frac{\rho(x)\bar{\rho}(T(x))}{\det b_{is}(x,T(x))}\right)
^{1/(n-2)}%
\]
we compute in local coordinates:%
\begin{align*}
\triangle_{g}z &  =\frac{1}{\lambda^{n/2}\sqrt{\det w_{ij}}}(\lambda
^{n/2-1}\sqrt{\det w_{ij}}w^{ij}z_{i})_{j}\\
&  =\frac{1}{\lambda}w^{ij}z_{ij}+\frac{1}{\lambda}(-w^{ik}w^{lj}%
\frac{\partial}{\partial x^{j}}w_{kl}+\frac{n-2}{2}w^{ij}\left(  \ln
\lambda\right)  _{j}+\frac{1}{2}w^{kl}\frac{\partial}{\partial x^{j}}%
w_{kl}w^{ij})z_{i}.
\end{align*}
Locally, because $\eta=du,$ we can write $w_{ab}=u_{ab}+c_{ab}$, thus
\[
\lambda\triangle_{g}z=w^{ij}z_{ij}+(-w^{ik}w^{lj}(\frac{\partial}{\partial
x^{j}}w_{kl}-\frac{\partial}{\partial x^{k}}w_{lj})-w^{ik}w^{lj}\frac
{\partial}{\partial x^{k}}w_{lj}+\frac{n-2}{2}w^{ij}\left(  \ln\lambda\right)
_{j}+\frac{1}{2}w^{kl}\frac{\partial}{\partial x^{j}}w_{kl}w^{ij})z_{i}%
\]%
\begin{align}
&  =w^{ij}z_{ij}+\left\{  w^{ik}b_{kl\bar{s}}b^{\bar{s}l}-b_{lj\bar{s}}%
b^{\bar{s}i}w^{lj}-\frac{1}{2}w^{ij}\left(  \ln\det w\right)  _{j}+\frac
{n-2}{2}w^{ij}\left(  \ln\lambda\right)  _{j}\right\}  z_{i}\nonumber\\
&  =w^{ij}z_{ij}+\left[
\begin{array}
[c]{c}%
w^{ik}b_{kl\bar{s}}b^{\bar{s}l}-b_{lj\bar{s}}b^{\bar{s}i}w^{lj}\\
-\frac{1}{2}w^{ij}\left(  \theta_{j}-2\frac{\bar{\rho}_{\bar{s}}T_{j}^{\bar
{s}}}{\bar{\rho}}+2b^{\bar{s}k}(b_{k\bar{s}j}+b_{k\bar{s}\bar{p}}T_{j}%
^{\bar{p}})\right)
\end{array}
\right]  z_{i}\label{line3}\\
&  =w^{ij}z_{ij}+\left\{  -b_{lj\bar{s}}b^{\bar{s}i}w^{lj}+\frac{\bar{\rho
}_{\bar{s}}}{\bar{\rho}}b^{\bar{s}i}-\frac{1}{2}w^{ij}\theta_{j}-b^{\bar{s}%
k}b_{k\bar{s}\bar{p}}b^{\bar{p}i})\right\}  z_{i}\nonumber\\
&  =Lz-\frac{1}{2}\lambda\langle\nabla\theta,\nabla z\rangle_{g}\nonumber
\end{align}
where we have used (\ref{id1}) repeatedly. \ In (\ref{line3}) we combined the
following two identities \
\[
\ln\lambda=\frac{1}{(n-2)}\left\{  \ln\rho(x)+\ln\bar{\rho}(x,T(x))-\ln\det
b_{is}\right\}
\]%
\[
\ln\det w=\theta+\ln\rho(x)-\ln\bar{\rho}(x,T(x))+\ln\det b_{is}%
\]
before taking the derivative. \ 
\end{proof}

\section{\bigskip Deformations}

Let $T:M\rightarrow\bar{M}$ be a Lie solution of the transport equation. \ As
was used in the previous calculation, this means that $T$ is locally a
solution of the optimal transport equation, or equivalently, $T$ is a cost
exponential of a closed $1$ form, satisfying%

\[
\det DT=\frac{\rho(x)}{\bar{\rho}(T(x))}.
\]
Throughout this section we will use the metric $g$ on $M$ defined by
(\ref{themetric}).

Given a $1$-form $\eta$ on $M$ we can define a (vertical)\ deformation vector
field
\[
V=-\eta_{i}c^{\bar{s}i}\partial_{\bar{s}}%
\]
where $\partial_{\bar{s}}$ is the coordinate tangent frame for\ $\mathfrak{T}%
\bar{M}$ \ \ (to be precise, $V$ is a section of the pullback bundle $T^{\ast
}\mathfrak{T}\bar{M}).$ \ For a vector field in this bundle, define a
deformation of the map $T$ via
\[
T_{V}(x)=\exp_{T(x)}^{\bar{M}}(V(x)).
\]
\ (We have fixed arbitrarily a metric on $\bar{M}$ in order to define
$\exp^{\bar{M}}$, which is no more than a convenience.) \ 

Along $N=$ $M\times\bar{M}-\mathcal{C}$ we define the $1$-form $\sigma
=c_{i}(x,\bar{x})dx^{i}$ (which differs from the total differential of $c$ by
$c_{\bar{s}}(x,\bar{x})d\bar{x}^{\bar{s}}$ ). \ Then we get the following
exact K\"{a}hler form
\[
\omega=d\sigma=-c_{i\bar{s}}(x,\bar{x})dx^{i}\wedge d\bar{x}^{\bar{s}}%
\]
on $N.$

At a solution $T$ we define the map
\[
\Phi:\Lambda^{1}(M)\rightarrow\Lambda^{2}(M)\oplus\Lambda^{0}(M)
\]
via%
\[
\Phi(\eta)=((Id\times T_{V})^{\ast}\omega,\ast\left(  T_{V}(x)^{\ast}\bar
{\rho}d\bar{x}-\rho dx\right)  )
\]

Note that the level set $\Phi^{-1}(0,0)$ consists of forms whose corresponding
maps $T_{V}$ are Lie solutions. \ \ Equivalently, the map $(Id\times T_{V})$
is a calibrated Lagrangian submanifold of $M\times\bar{M}$ with respect to the
metric in \cite{KMW}.

\begin{lemma}
\bigskip The image of $\Phi$ lies in exact $2$-forms and coexact $0$-forms. \ 
\end{lemma}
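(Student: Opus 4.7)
My plan is to handle each of the two components of
\[
\Phi(\eta) = \bigl((Id \times T_V)^*\omega,\ *(T_V^*\bar\rho\, d\bar x - \rho\, dx)\bigr)
\]
independently, exploiting two simple observations: the K\"ahler form $\omega$ is already globally exact on $N$, and coexactness of a $0$-form on the closed manifold $M$ reduces to having zero mean.

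For the $\Lambda^2$-component, the key point is that $\omega = d\sigma$ on $N$ by construction. Since $T$ avoids the cut locus $\mathcal{C}$ and we restrict to deformations $V$ small enough that the graph of $T_V = \exp^{\bar M}_{T(x)}(V(x))$ also lies in $N$, pullback commutes with the exterior derivative and
\[
(Id \times T_V)^*\omega = d\bigl((Id \times T_V)^*\sigma\bigr),
\]
so this component is exact with an explicit primitive.

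For the $\Lambda^0$-component, the Hodge decomposition on the closed, oriented Riemannian manifold $(M,g)$ gives $\Lambda^0(M) = \mathbb{R} \oplus \delta \Lambda^1(M)$, so a smooth function is coexact if and only if its integral against $dV_g$ vanishes. Applying $*$, this reduces to showing that the top-degree form $\beta := T_V^*\bar\rho\, d\bar x - \rho\, dx$ is exact on $M$, equivalently that $\int_M \beta = 0$. Using that $\rho$ and $\bar\rho$ are probability densities,
\[
\int_M \rho\, dx = 1, \qquad \int_M T_V^*\bar\rho\, d\bar x = \deg(T_V) \int_{\bar M} \bar\rho\, d\bar x = \deg(T_V).
\]
The Lie solution $T$ is a volume-preserving diffeomorphism of $M$ onto $\bar M$, so $\deg(T) = 1$, and the one-parameter family $s \mapsto \exp^{\bar M}_{T(x)}(sV(x))$ gives a smooth homotopy from $T$ to $T_V$; homotopy invariance of degree then yields $\deg(T_V) = 1$, the two integrals cancel, and $*\beta$ is coexact.

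The only step requiring genuine care is the homotopy argument, which needs $V$ to lie within the injectivity radius of the auxiliary metric fixed on $\bar M$ so that $\exp^{\bar M}_{T(x)}(sV(x))$ is well-defined for all $s \in [0,1]$; this is automatic for the small deformations relevant to the local moduli problem. In summary, the lemma is essentially a formal consequence of the exactness of $\omega$ on $N$ and of the degree invariance of $T$ under small perturbations.
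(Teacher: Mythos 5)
Your proof is correct and tracks the paper's argument closely on the first component and diverges slightly—in a useful way—on the second. For the $\Lambda^2$-component, both you and the paper observe that $(Id\times T_V)^*\omega = (Id\times T_V)^* d\sigma = d\bigl((Id\times T_V)^*\sigma\bigr)$, so the form is exact; your statement that the graph of $T_V$ must avoid $\mathcal{C}$ is the right caveat and is implicitly assumed in the paper. For the $\Lambda^0$-component, the paper simply asserts $\int_M \bigl(T_V^*\bar\rho\,d\bar x - \rho\,dx\bigr)=0$ "because the diffeomorphism $T$ is a change of variables," leaving tacit both the step that zero mean is equivalent to coexactness (which you correctly draw from the Hodge decomposition $\Lambda^0 = \mathbb{R}\oplus\delta\Lambda^1$) and the fact that what needs to be a change of variables is $T_V$, not $T$. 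Your degree-theoretic argument—$\int_M T_V^*(\bar\rho\,d\bar x) = \deg(T_V)$, with $\deg(T_V)=\deg(T)=1$ by homotopy invariance along $s\mapsto \exp^{\bar M}_{T(x)}(sV(x))$—is a cleaner way to close this gap, since it does not require $T_V$ itself to be a diffeomorphism (only that it lie in the right homotopy class), whereas the paper's phrasing tacitly relies on $T_V$ remaining a diffeomorphism for small $V$. Both routes are valid; yours buys a small amount of robustness and makes the reduction to zero mean explicit.
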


\begin{proof}
The first factor is a pullback of an exact form. For the second factor,
\[
\int T_{V}(x)^{\ast}\bar{\rho}d\bar{x}-\rho dx=0,
\]
which follows easily from the fact that the diffeomorphism $T$ is simply a
change of integration variables. We are assuming the total mass of both
densities is $1.$
\end{proof}

\begin{lemma}
\label{main lemma} At $0\in C^{k+1,\alpha}(\Lambda^{1}(M))$, the differential
$D\Phi$ satisfies
\[
D\Phi(\eta)=(d\eta,d^{\ast}\eta).
\]
In particular, $D\Phi$ is a topological linear isomorphism
\[
d^{\ast}C^{k+2,\alpha}(\Lambda^{2}(M))\oplus dC^{k+2,\alpha}(\Lambda
^{0}(M))\longrightarrow dC^{k+1,\alpha}(\Lambda^{1}(M))\oplus d^{\ast
}C^{k+1,\alpha}(\Lambda^{1}(M)).
\]

\end{lemma}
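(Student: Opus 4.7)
The plan is to linearize each component of $\Phi$ directly at $\eta=0$ using Cartan's magic formula, identify the differential with $(d\eta,d^{\ast}\eta)$ (up to overall sign conventions), and then conclude the isomorphism statement from standard Hodge theory on $M$ equipped with the metric $g$ from (\ref{themetric}).

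For the first component, exactness $\omega=d\sigma$ gives $(Id\times T_V)^{\ast}\omega=d[(Id\times T_V)^{\ast}\sigma]$. Regarding the variation $\frac{d}{dt}|_{t=0}(Id\times T_{tV})$ as the vertical vector field $(0,V)$ along the graph of $T$ and using $d\omega=0$, Cartan's formula yields
\[
\frac{d}{dt}\bigg|_{t=0}(Id\times T_{tV})^{\ast}\omega \;=\; d\bigl[(Id\times T)^{\ast}\iota_{(0,V)}\omega\bigr].
\]
Computing $\iota_{(0,V)}\omega=c_{i\bar{s}}V^{\bar{s}}dx^{i}$ and substituting the formula for $V^{\bar{s}}$ in terms of $\eta$, the identity $b_{i\bar{s}}b^{\bar{s}j}=\delta_{i}^{j}$ collapses the expression to $\eta$, so the first component of $D\Phi$ at $\eta=0$ is $d\eta$. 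For the second component, since $\bar{\rho}d\bar{x}$ is a top form on $\bar{M}$, Cartan's formula reduces to
\[
\frac{d}{dt}\bigg|_{t=0}T_{tV}^{\ast}(\bar{\rho}d\bar{x}) \;=\; T^{\ast}\mathcal{L}_{V}(\bar{\rho}d\bar{x}) \;=\; d\bigl[T^{\ast}\iota_{V}(\bar{\rho}d\bar{x})\bigr].
\]
The crucial technical step is the identity $T^{\ast}\iota_{V}(\bar{\rho}d\bar{x})=\ast\eta$, with $\ast$ the Hodge star of $(M,g)$. To derive it, push the interior product through the diffeomorphism, $T^{\ast}\iota_{V}(\bar{\rho}d\bar{x})=\iota_{T^{\ast}V}T^{\ast}(\bar{\rho}d\bar{x})=\iota_{T^{\ast}V}(\rho dx)$, using the Lie solution property $T^{\ast}\bar{\rho}d\bar{x}=\rho dx$. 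From (\ref{id1}) one gets $((DT)^{-1})^{k}_{\bar{s}}=w^{kj}b_{j\bar{s}}$, so $T^{\ast}V=w^{kj}\eta_{j}\partial_{k}$, whose $g$-metric dual is $g_{kj}w^{ji}\eta_{i}\,dx^{k}=\lambda\eta$. At any Lie solution $\sqrt{\det g}=\lambda\rho$ (since $\det w_{ij}=\det b_{i\bar{s}}\cdot\rho/\bar{\rho}$ there and $\lambda^{n-2}=\rho\bar{\rho}/\det b_{i\bar{s}}$), so the elementary formula $\iota_{X}(d\mathrm{vol}_{g})=\ast X^{\flat}$ produces $\iota_{T^{\ast}V}(\rho dx)=\lambda^{-1}\ast(\lambda\eta)=\ast\eta$. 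Applying $\ast$ and using the identity $\ast d\ast=\pm d^{\ast}$ on $1$-forms gives $D\Phi_{2}(\eta)=d^{\ast}\eta$.

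For the isomorphism claim, decompose $\eta=d^{\ast}\alpha+d\phi$ and note that $d^{2}=(d^{\ast})^{2}=0$ force
\[
D\Phi(\eta) \;=\; (dd^{\ast}\alpha,\; d^{\ast}d\phi),
\]
so the two summands of the domain map independently into the two summands of the target. Injectivity is immediate: $dd^{\ast}\alpha=0$ renders $d^{\ast}\alpha$ both coexact and closed, hence harmonic, hence zero, and likewise $d^{\ast}d\phi=0$ forces $d\phi=0$. For surjectivity, given $\beta_{1}=d\mu_{1}\in dC^{k+1,\alpha}(\Lambda^{1}(M))$, apply the Green's operator $G$ of the Hodge Laplacian of $(M,g)$ (which gains two Hölder derivatives) to produce a Hodge decomposition $\mu_{1}=\mu_{h}+df+d^{\ast}\gamma$ with $\gamma=dG\mu_{1}\in C^{k+2,\alpha}(\Lambda^{2}(M))$; since $d\mu_{h}=0$ and $d^{2}f=0$, $dd^{\ast}\gamma=d\mu_{1}=\beta_{1}$. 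The coexact factor in the target is handled symmetrically by taking $\phi=d^{\ast}G\mu_{2}$ when $\beta_{2}=d^{\ast}\mu_{2}$, and bicontinuity of the inverse follows from the elliptic estimates for $G$.

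I expect the principal technical hurdle to be the identification $T^{\ast}\iota_{V}(\bar{\rho}d\bar{x})=\ast\eta$: it is exactly the conformal factor $\lambda=(\rho\bar{\rho}/\det b_{i\bar{s}})^{1/(n-2)}$ in $g=\lambda w$ that, together with the Lie solution property and the contact identity (\ref{id1}), converts the linearization of the mass-preservation condition into the Hodge codifferential with respect to $g$. Once this identification is established, the argument is a formal matter of Hodge theory, and it clarifies the role of the particular metric singled out in Proposition \ref{LB}.
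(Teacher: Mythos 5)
Your proposal is correct, and for the first component it follows exactly the paper's argument (Cartan's formula on $M\times\bar M$, contracting the exact K\"ahler form with the vertical variation field, with the usual modulo-sign caveats).

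For the second component, however, you take a genuinely different and cleaner route. The paper writes $\ast(T_V^{\ast}\bar\rho d\bar x-\rho dx)=\frac{1}{\rho\lambda}(\det DT_V\,\bar\rho(T_V)-\rho)$, differentiates the right side in coordinates, simplifies at $t=0$ using $\det DT=\rho/\bar\rho(T)$, and then \emph{matches} the resulting first-order operator against the explicit coordinate expression for $\lambda\,d^{\ast}\eta$ extracted from the proof of Proposition~\ref{LB}. You instead apply Cartan's formula directly to the top form $\bar\rho\,d\bar x$, push the contraction through the diffeomorphism to get $\iota_{(DT)^{-1}V}(\rho\,dx)$, and then invoke two identities: $(DT)^{-1}V=w^{kj}\eta_j\partial_k$ from (\ref{id1}), and the volume identity $\sqrt{\det g}=\lambda\rho$ at a Lie solution (so $\rho\,dx=\lambda^{-1}dVol_g$), to conclude $\iota_{(DT)^{-1}V}(\rho\,dx)=\ast\eta$. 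This bypasses the Proposition~\ref{LB} computation entirely, is coordinate-free apart from the two contact identities, and makes transparent exactly why the exponent $1/(n-2)$ in $\lambda$ is the right choice: it is what makes the $g$-metric dual of $(DT)^{-1}V$ equal $\lambda\eta$ and, simultaneously, $dVol_g=\lambda\rho\,dx$, so that the two factors of $\lambda$ cancel. Your surjectivity/injectivity discussion via the Green operator is more detailed than the paper's one-line appeal to Hodge theory, but says the same thing. The only nit is the sign in $\ast d\ast=\pm d^{\ast}$, which you flag explicitly and which does not affect the isomorphism claim.
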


\begin{proof}
To compute the derivative, for the first factor of the map $\Phi$ we use the
Lie derivative on the manifold $M\times\bar{M}$ \ (cf \cite[section
2.2.2.]{Ma})%
\begin{align*}
\frac{d}{dt}(Id\times T_{V})^{\ast}\omega|_{t=0} &  =%
\mathcal{L}%
_{\left(  -c^{\bar{s}m}\eta_{m}\partial_{\bar{s}}\right)  }(Id\times
T_{V})^{\ast}\omega\\
&  =(Id\times T_{V})^{\ast}\left[  -c^{\bar{s}m}\eta_{m}\partial_{\bar{s}%
}\lrcorner d\omega-d(c^{\bar{s}m}\eta_{m}\partial_{\bar{s}}\lrcorner
\omega)\right]  \\
&  =(Id\times T_{V})^{\ast}\left[  d(-c^{\bar{s}m}\eta_{m}\partial_{\bar{s}%
}\lrcorner\omega)\right]  \\
&  =(Id\times T_{V})^{\ast}\left[  d\left(  \eta_{m}dx^{m}\right)  \right]  \\
&  =d\eta.
\end{align*}
For the second factor, to begin we note that because the volume element is
given by
\[
Vol_{g}=\sqrt{\det w_{ij}\left(  \frac{\rho\bar{\rho}}{\det b_{i\bar{s}}%
}\right)  ^{n/n-2}}=\sqrt{\frac{\rho}{\bar{\rho}}\det b_{i\bar{s}}\left(
\frac{\rho\bar{\rho}}{\det b_{i\bar{s}}}\right)  ^{n/n-2}}=\rho\lambda,
\]
we have
\[
\ast(T_{v}^{\ast}\bar{\rho}d\bar{x}-\rho dx)=\frac{1}{\rho\lambda}\left(  \det
DT_{V}\bar{\rho}(T)-\rho\right)  .
\]
Differentiating
\begin{align}
&  \left(  \frac{1}{\rho\lambda}\det DT_{V}\bar{\rho}(T)-\frac{1}{\lambda
}\right)  ^{\prime}\nonumber\\
&  =\det DT_{V}\bar{\rho}(T)\frac{1}{\rho\lambda}\left\{  (\ln\det
DT)^{\prime}+\left(  \ln\bar{\rho}\right)  ^{\prime}-\left(  \ln\rho\right)
^{\prime}-\ln\lambda^{\prime}\right\}  +\frac{1}{\lambda}\left(  \ln
\lambda\right)  ^{\prime}.\label{prime}%
\end{align}
Noting that at $0$
\[
\det DT=\frac{\rho}{\bar{\rho}(T)}%
\]
the expression (\ref{prime}) becomes%
\[
\frac{1}{\lambda}\left\{  (\ln\det DT)^{\prime}+\left(  \ln\bar{\rho}\right)
^{\prime}-\left(  \ln\rho\right)  ^{\prime}\right\}  .
\]
Now in particular, differentiating with respect to $t$ we have
\begin{align}
&  \frac{d}{dt}\ast(T_{v}^{\ast}\bar{\rho}d\bar{x}-\rho dx)|_{t=0}=\frac
{1}{\lambda}\left[  \left(  DT^{-1}\right)  _{\bar{s}}^{j}T_{j,t}^{\bar{s}%
}+\left(  \ln\bar{\rho}\right)  _{\bar{s}}T_{t}^{\bar{s}}\right]  \nonumber\\
&  =\frac{1}{\lambda}\left[  \left(  DT^{-1}\right)  _{\bar{s}}^{j}\left(
b^{\bar{s}k}\eta_{k}\right)  _{j}+\left(  \ln\bar{\rho}\right)  _{\bar{s}%
}b^{\bar{s}k}\eta_{k}\right]  \nonumber\\
&  =\frac{1}{\lambda}\left[  b_{i\bar{s}}w^{ij}b^{\bar{s}k}\eta_{k,j}%
-b_{i\bar{s}}w^{ij}b^{\bar{p}k}b^{\bar{s}m}(b_{m\bar{p}j}+b_{m\bar{p}\bar{r}%
}b^{\bar{r}l}w_{lj})\eta_{k}+\left(  \ln\bar{\rho}\right)  _{\bar{s}}%
b^{\bar{s}k}\eta_{k}\right]  \nonumber\\
&  =\frac{1}{\lambda}\left[  w^{ij}\eta_{i,j}-w^{ij}b^{\bar{p}k}b_{i\bar{p}%
j}\eta_{k}-b^{\bar{p}k}b_{i\bar{p}\bar{r}}b^{\bar{r}i}\eta_{k}+\left(  \ln
\bar{\rho}\right)  _{\bar{s}}b^{\bar{s}k}\eta_{k}\right]  \label{delta}\\
&  =d^{\ast}\eta
\end{align}
using (\ref{id1}) repeatedly. \ The\ last equality follows from the proof of
Proposition \ref{LB}
\[
\lambda d^{\ast}\eta=w^{ij}\eta_{i,j}+\left\{  -b_{lj\bar{s}}b^{\bar{s}%
i}w^{lj}+\left(  \ln\bar{\rho}\right)  _{\bar{s}}b^{is}-b^{\bar{s}k}%
b_{k\bar{s}\bar{p}}b^{\bar{p}i}\right\}  \eta_{i}.
\]
The latter conclusion of the Lemma follows from standard algebra using the
Hodge decomposition%
\[
C^{k+1,\alpha}(\Lambda^{1}(M))=\mathcal{H}^{1}\oplus dC^{k+2,\alpha}%
(\Lambda^{0}(M))\oplus d^{\ast}C^{k+2,\alpha}(\Lambda^{2}(M)).
\]
\ 
\end{proof}

\textbf{Proof of Main Theorem when $n \geq3$} . \ \ Lemma\ \ref{main lemma}
shows that at $0$, the smooth map $\Phi$ has surjective differential
\[
D\Phi:C^{k+1,\alpha}(\Lambda^{1}(M))\longrightarrow d^{\ast}C^{k+1,\alpha
}(\Lambda^{1}(M))\oplus dC^{k+1,\alpha}(\Lambda^{1}(M))
\]
onto the product of exact and coexact forms. \ The kernel at $0$ is the
harmonic forms, which splits by the Hodge decomposition. \ It follows by the
Implicit Function Theorem (see [Ma, Thm 2.11]) that for each harmonic form
$\eta$ close enough $0$ there is a unique form $\chi(\eta)$ lying in the
orthogonal complement of the harmonic forms so that $\Phi(\eta+\chi(\eta))=0.$
\ Thus a neighborhood of $0$ in the harmonic forms on $M$ parametrizes the
moduli space near $T.$

\bigskip

\subsection{$n=2.$}

When $n=2$ we are missing the power of the Hodge theorem, which asserts that
the kernel of the operator has the same dimension as the first cohomology
class. \ We must somehow show that the set of solutions of \{(\ref{delta}%
)$=0\}$ has this same dimension. \ Forming the elliptic operator
\[
L=d^{\ast}d+\delta^{\ast}\delta
\]
where $\delta:\Lambda^{1}\rightarrow\Lambda^{0}$ is the operator defined by
(\ref{delta}), the Implicit Function Theorem arguments as used above provide
that the dimension of the moduli space near a solution is the dimension of the
kernel of $L.$

\bigskip For a given $M^{2},\bar{M}^{2} $ consider the transportation problem
of finding
\begin{equation}
F\left(  x,\theta\right)  =(T\left(  x,\theta\right)  ,\Theta\left(
x,\theta\right)  ):M^{2}\times S^{1}\rightarrow\bar{M}^{2}\times S^{1}
\label{lieprod}%
\end{equation}
minimizing the cost
\[
\tilde{c}[\left(  x,\theta\right)  ,(\bar{x},\bar{\theta})]=c(x,\bar
{x})+dist_{S^{1}}^{2}(\theta,\bar{\theta})
\]
among all maps $(F,\Theta)$ pushing $\rho\wedge d\theta$ forward to $\bar
{\rho}\wedge d\theta$. \ 

For any Lie solution $T:M^{2}$ $\rightarrow\bar{M}^{2}$ \ it is clear that
\begin{equation}
F\left(  x,\theta\right)  =(T(x),\theta+\theta_{0}) \label{lieF}%
\end{equation}
is a Lie solution to the problem (\ref{lieprod}). \ \ By Theorem
\ref{main thm} for $n=3$ there is a space of deformations of solutions, and it
has dimension equal to $1+b_{1}(M).$ \ Suffice then to show that these
deformations decompose into deformations in each factor. \ 

Let $T$ be a Lie solution on $M$ and let (\ref{lieF}) be a Lie solution on
$M\times S^{1}.$ Let $\eta$ be a $1$-form which defines a tangent vector to
the space of deformations on $M^{2}\times S^{1},$ at the solution $F,$ \ and
write
\[
\eta=\eta_{1}\omega^{1}+\eta_{2}\omega^{2}+\eta_{\theta}d\theta
\]
for some cotangent frame $\omega^{1},\omega^{2}$ for $M^{2}.$
\ Differentiating the equation
\[
d^{\ast}\eta=0
\]
in the $\theta$ direction, \ we have that%

\[
w^{ij}\eta_{i,j\theta}+\left\{  -b_{bj\bar{s}}b^{\bar{s}i}w^{bj}+\left(
\ln\bar{\rho}\right)  _{\bar{s}}b^{\bar{s}i}-b^{\bar{s}k}b_{\bar{s}k\bar{p}%
}b^{\bar{p}i}\right\}  \eta_{i},_{\theta}=0,
\]
using the fact that the warped product metric does not depend on $\theta.$
Also, because $\eta$ is a closed form, locally we have $\eta_{\theta,ij}%
=\eta_{i,j\theta}$ and $\eta_{i},_{\theta}=\eta_{\theta,i}$. \ Thus (the
honest function) $z=\eta_{\theta}$ locally satisfies an elliptic equation of
the form%

\[
w^{ij}z_{ij}+A^{i}z_{i}=0
\]
so enjoys a maximum principle. \ We conclude that on the compact manifold
$M^{2}\times S^{1},$ $\ $the $S^{1}$ $\ $component $\eta_{\theta}$ of the
deformation must be constant. \ In particular, the other two components
satisfy
\[
\delta(\eta_{1}\omega^{1}+\eta_{2}\omega^{2})=0
\]
so we conclude that the kernel of $L$ must have dimension $b_{1}(M).$

\section{A politician's optimal transportation problem}

We give the following application of solutions to the above problem. Suppose
you are the political leader of a compact manifold with non simply connected
topology, and you are in charge of solving an optimal transportation problem.
Due to forces in effect before you came into power, you discover that the
distributions which need to be paired are in fact the same (this fact is known
only to your office.) You are aware of the unspeakably high political
\textquotedblleft cost of doing nothing,\textquotedblright\ as well as the
clout of the Transportation Lobby, so proposing the trivial
identity\ transference plan is not an option. Instead, you must come up with a
plan which is nontrivial, but does not appear arbitrary. Deforming according
to a harmonic $1$-form as described in section 3, you can find a
transportation plan which has positive cost, and locally, to each supplier in
simply connected districts, is given by the cost exponential of a potential
function, so appears optimal. \ As long as no one exhibits a set of points for
which the plan is not cyclically monotone (cf \cite[Chapter 5]{V}), it will be
believed that the plan is a good one . \ If the deformation is small, such a
cycle will involve a large set of carefully selected points and will be quite
difficult to exhibit.

\end{document}